\newcommand{\F}{\mathcal F}
\newcommand{\Be}{\begin{equation}}
\newcommand{\Ee}{\end{equation}}
\newcommand{\Bm}{\begin{multline}}
\newcommand{\Em}{\end{multline}}
\newcommand{\Bea}{\begin{eqnarray}}
\newcommand{\Eea}{\end{eqnarray}}
\newcommand{\Beas}{\begin{eqnarray*}}
\newcommand{\Eeas}{\end{eqnarray*}}
\newcommand{\Benu}{\begin{enumerate}}
\newcommand{\Eenu}{\end{enumerate}}
\newcommand{\Bi}{\begin{itemize}}
\newcommand{\Ei}{\end{itemize}}
\def\bom{{\partial\Omega}}
\def\intslash{\rlap{\kern  .32em $\mspace {.5mu}\backslash$ }\int}
\def\qsl{{\rlap{\kern  .32em $\mspace {.5mu}\backslash$ }\int_{Q_x}}}
\def\emph#1{{\it #1 }}
\def\ga{\gamma}
\def\cf{{\it cf}}
\def\dist{{\text{\it dist}}}
\def\supp{{\text{\rm supp}}}
\def\inn#1#2{\langle#1,#2\rangle}
\def\noi{\noindent}
\def\meas{{\text{\rm meas}}}
\def\lc{\lesssim}
\def\eps{\varepsilon}
\def\la{\lambda}             
              \def\Si{\Sigma}
\def\om{\omega}
\def\fM{{\mathfrak {M}}}
\def\fR{{\mathfrak {R}}}
\def\bbM{{\mathbb {M}}}
\def\bbR{{\mathbb {R}}}
\def\bbZ{{\mathbb {Z}}}
\def\sA{{\mathscr {A}}}
\def\sM{{\mathscr {M}}}
\def\cA{{\mathcal {A}}}
\def\cC{{\mathcal {C}}}
\def\cE{{\mathcal {E}}}
\def\cF{{\mathcal {F}}}
\def\cH{{\mathcal {H}}}
\def\cM{{\mathcal {M}}}
\def\cN{{\mathcal {N}}}
\def\be#1{\begin{equation}\label{ #1}}
\def\endeq{\end{equation}}
\def\endal{\end{align}}
\def\bas{\begin{align*}}
\def\eas{\end{align*}}
\def\bi{\begin{itemize}}
\def\ei{\end{itemize}}
\def\eps{\varepsilon}
\def\emph#1{{\it #1}}
\def\textbf#1{{\bf #1}}
\theoremstyle{plain}
   \newtheorem{theorem}{Theorem}[section]
   \newtheorem{proposition}[theorem]{Proposition}
   \newtheorem{lemma}[theorem]{Lemma}
   \newtheorem{theorem*}{Theorem}
\theoremstyle{remark}
\theoremstyle{definition}
\begin{document}

\title
[Problems  on averages  and  lacunary maximal functions]
{Problems  on averages \\ and  lacunary maximal functions}

\author[A. Seeger and J.  Wright]{Andreas Seeger and James  Wright}

\address{Department of  Mathematics\\ University of Wisconsin-Madison\\
480 Lincoln Drive\\ Madison, WI 53706, USA}
\email{seeger@math.wisc.edu}

\address{Maxwell Institute for Mathematical Sciences and the
School of Mathematics\\ University of
Edinburgh\\ JCMB, King's Buildings, Mayfield Road, \\ Edinburgh EH9 3JZ,
Scotland}

\email{J.R.Wright@ed.ac.uk}

%\date{July 23, 2010}

\subjclass{42B25 (primary), 42B15, 42B30}
\keywords{Regularity of averages, Lacunary maximal functions}

\begin{thanks} {A.S. supported in part by 
NSF 0652890 and  MEC grant MTM2010-16518.
J.W. supported in part by an EPSRC grant.}
\end{thanks}

\begin{abstract}
We prove three results concerning convolution operators and lacunary maximal functions associated to dilates of measures.
First we obtain
an $H^1$ to $L^{1,\infty}$ bound for lacunary maximal operators under a
dimensional assumption on the underlying measure and an assumption on
 an  $L^p$ regularity bound for some $p>1$. Secondly,
we obtain a necessary and sufficient condition for  $L^2$ boundedness of
lacunary maximal operator associated to averages over convex curves in the
plane. Finally we prove an $L^p$ regularity result for such averages.
We formulate various open problems.
\end{abstract}

\maketitle

\section{Introduction}

We  consider a compactly supported finite Borel measure $\mu$ and define its
dyadic dilates by
$\inn{\mu_k}{f}= \inn{\mu}{f(2^k\cdot)}$. The main objects of this paper
are the convolutions $f\mapsto f*\mu_k$ and
 the   lacunary maximal function  given by
$$\sM f(x)= \sup_{k\in \bbZ} |f*\mu_k(x)|.$$
Throughout this paper the dilates  $2^k$ can be replaced by
more general lacunary dilates $\lambda_k$
satisfying $\inf_k \lambda_{k+1}/\lambda_k > 1$.

If
$\mu$  satisfies the condition
$\widehat \mu(\xi)=O(|\xi|^{-\eps})$ (some $\eps>0$),
then  $L^p$-boundedness of $\sM$ holds in the
range $1<p<\infty$, see {\it e.g.} \cite{duordf}.
For suitable classes of examples we discuss two problems,
namely what may happen in the limiting case $p=1$, and,  secondly, what
could be said  about boundedness  of $\sM$
if the above decay condition on $\widehat \mu$ is relaxed.

{\it Notation:} For two nonnegative quantities $A$ and $B$ let $A\lc B$ denote the statement that
$A\le  CB$ for some constant $C$. The Lebesgue measure of a set  $E$ is
denoted  by $\meas(E)$.  Throughout  we work with a fixed inhomogeneous dyadic frequency decomposition $\{P_k\}_{k=0}^\infty$. Let 
$\beta_\circ\in C^\infty_c(\bbR)$ be supported in $(-1,1)$ and 
equal to $1$ in $[-1/2,1/2]$. Define the operators $P_k$ by
$\widehat {P_0 f}(\xi)=\beta_\circ (|\xi|)\widehat f(\xi)$ if $k=0$ and
by
$\widehat {P_k f}(\xi)=(\beta_\circ (2^{-k}|\xi|)
-\beta_\circ (2^{-k+1}|\xi|))\widehat f(\xi)$ if $k>0$.

\subsection*{$\bold {H}^{\boldsymbol 1}\to \bold{L}^{\boldsymbol 1,\boldsymbol \infty}$ boundedness of lacunary maximal operators}

Concerning the case $p=1$, ${\mathscr M}$ can never be bounded on $L^1({\mathbb R}^d)$,
outwith the trivial case. One  can ask whether
%\marginpar{no need to stick our necks out
%and make such a conjecture}
the  smoothing condition
$\widehat \mu(\xi)=O(|\xi|^{-\eps})$ implies that  ${\mathscr M}$ is of weak-type $(1,1)$, i.e.
maps  $L^1({\mathbb R}^d)$ to the Lorentz space $L^{1,\infty}({\mathbb R}^d)$.
To the best of our knowledge no counterexample and no example
is known for the case that $\mu$ is a singular measure with the decay assumption on $\widehat \mu$.
%On the other hand,
%no example of an averaging operator  associated with a  manifold with
%curvature  is known  where the corresponding lacunary maximal operator
% is weak type $(1,1)$.
%

For  various classes of singular measures  it has been observed that a
somewhat weaker endpoint inequality holds, namely that
$\sM$ maps the (usual isotropic) Hardy space $H^1({\mathbb R}^d)$ to $L^{1,\infty}({\mathbb R}^d)$. The first results in this direction are
due to Christ  \cite{christ} who used  some powerful variants  of  Calder\'on-Zygmund theory.
We formulate a theorem  which unifies  and extends some previous results,
with some simplification in the proofs. In what follows we let
 $\sA$ denote the convolution operator $\sA: f\mapsto f*\mu$.

\begin{theorem}\label{H1}
For $\rho>0$ let $\cN_\Si(\rho)$ be the minimal number of balls of radius 
$\rho$ needed to cover the support of $\mu$. Let  $0< s\le d$ and suppose that
\begin{align}
\label{covering}&\sup_{0<\rho<1} \rho^{d-s} \cN_\Si(\rho) <\infty
\\
\label{sobassu}
&\sup_{k>0} 2^{k(d-s)(1-\frac 1p)} \|P_k\sA\|_{L^p(\bbR^d)\to L^p(\bbR^d)} <\infty,\quad
 \text{ for some $p>1$.}
\end{align}
Then the  lacunary maximal operator ${\sM}$ maps $H^1({\mathbb R}^d)$
into $L^{1,\infty}({\mathbb R}^d)$.
\end{theorem}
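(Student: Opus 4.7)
The proof proceeds by atomic decomposition: it suffices to establish a uniform bound $\|\sM a\|_{L^{1,\infty}}\le C$ for every $H^1$-atom $a$. Using translation invariance and the dilation invariance of $\sM$ (a dyadic rescaling of the input merely shifts the lacunary sequence $\{2^k\}$), we may normalize so that $a$ is supported in $B(0,1)$ with $\|a\|_\infty\le 1$ and $\int a\,dx=0$. For the small-scale regime $k<0$, the dilate $\mu_k$ has support in $B(0,1)$, hence $a*\mu_k$ is supported in $B(0,2)$; this regime is handled by the $L^p$-boundedness of $\sM$ (a consequence of \eqref{sobassu} via Littlewood--Paley together with the standard smooth lacunary maximal function estimate), combined with the embedding $L^p\hookrightarrow L^{1,\infty}$ on sets of bounded Lebesgue measure.

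The main task is the regime $k\ge 0$. Decompose $\mu=\sum_{l\ge 0}P_l\mu$ and use the scaling identity $P_l\mu_k=(P_{l+k}\mu)^k$, with $(\cdot)^k$ denoting the $k$-dilate $\nu\mapsto 2^{-kd}\nu(\cdot/2^k)$, to write $a*\mu_k=\sum_{l\ge 0}a*(P_l\mu)^k$, and split this sum at $l=k$. For $l<k$ the kernel is smooth at the atom's scale: the cancellation $\int a=0$ together with $\|\nabla(P_l\mu)^k\|_\infty\lesssim 2^{(l-k)(d+1)}$ and Schwartz-type decay off $\Sigma_k+B(0,2)$ (where $\Sigma_k=2^k\supp\mu$) yields after summation the pointwise bound
\begin{equation*}
\Big|\sum_{l<k}a*(P_l\mu)^k(x)\Big|\lesssim (1+\dist(x,\Sigma_k+B(0,2)))^{-(d+1)}.
\end{equation*}
For $l\ge k$, the scaling invariance of the $L^p$-operator norm combined with \eqref{sobassu} gives $\|a*(P_l\mu)^k\|_p\lesssim 2^{-l(d-s)(1-1/p)}$; the support localization in $\Sigma_k+B(0,2)$, whose Lebesgue measure is $\lesssim 2^{k(d-s)}$ by \eqref{covering}, together with H\"older's inequality then produces the geometrically summable bound
\begin{equation*}
\|a*(P_l\mu)^k\|_1\lesssim 2^{-(l-k)(d-s)(1-1/p)}.
\end{equation*}

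The principal obstacle is converting these per-$k$ estimates into an $L^{1,\infty}$-bound for the supremum, since the supports $\Omega_k:=\Sigma_k+B(0,2)$ have measure $\sim 2^{k(d-s)}$ that is not summable in $k$, so a naive union bound fails. The resolution, in the spirit of Christ \cite{christ}, is a $\lambda$-dependent level-set decomposition: for $\lambda\in(0,1)$ select $K=K(\lambda)$ so that $|\bigcup_{k\le K}\Omega_k|\lesssim 2^{K(d-s)}\sim 1/\lambda$ and take this union as an exceptional set of the correct size. Off this set the low-frequency Schwartz tails together with the $L^p$-controlled high-frequency pieces coming from $k>K$, summed using the geometric decay in $l-k$ from \eqref{sobassu} and the covering estimate \eqref{covering}, contribute an additional level-set mass of order $1/\lambda$. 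For $\lambda\ge 1$, the $L^p$-boundedness of $\sM$ gives $\lambda|\{\sM a>\lambda\}|\lesssim \lambda^{1-p}\|a\|_p^p\le C$ directly. The delicate step is balancing these ranges and jointly exploiting \eqref{covering} and \eqref{sobassu}, without invoking any pointwise decay hypothesis on $\widehat\mu$.
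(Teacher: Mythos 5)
Your proposal founders at its very first step: the claim that it suffices to prove a uniform bound $\|\sM a\|_{L^{1,\infty}}\le C$ over individual $H^1$-atoms $a$. This reduction is invalid when the target space is $L^{1,\infty}$, because $L^{1,\infty}$ is not normable: for an infinite superposition $f=\sum_j\lambda_j a_j$ one only has a quasi-triangle inequality with a logarithmic loss (the Stein--Taibleson--Weiss type lemma gives $\|\sum_j\lambda_j g_j\|_{L^{1,\infty}}\lc\sum_j|\lambda_j|(1+\log(\sum_i|\lambda_i|/|\lambda_j|))$ for $\|g_j\|_{L^{1,\infty}}\le 1$), and this loss cannot be removed without additional structure. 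Overcoming exactly this obstruction is the whole point of the argument in the paper (following Christ): one must treat all atoms simultaneously at a fixed level $\alpha$, which is done by assigning to each cube $Q$ of the Chang--Fefferman decomposition a threshold $\tau(Q)$ calibrated so that $2^{(d-s)\tau(Q)}2^{sL(Q)}\approx\alpha^{-1}|Q|^{1/2}\gamma_Q$, forming a single exceptional set $\cE=\bigcup_Q\bigcup_{L(Q)<k\le\tau(Q)}(\supp(\mu_k)+Q^*)$ whose total measure sums to $\lc\alpha^{-1}\|f\|_{H^1}$, and then proving a global estimate $\|I\|_p^p\lc\alpha^{p-1}\sum_Q|Q|^{1/2}\gamma_Q$ for the superposed contribution of all scales $k>\tau(Q)$ across all $Q$ at once. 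Your sketch never addresses how the per-atom level sets are to be combined; your rescaling normalization ("shift the lacunary sequence") also cannot be applied to all atoms simultaneously, since they live at all dyadic scales.

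The per-atom ingredients you assemble are essentially the correct ones and do appear in the paper's proof in globalized form: the frequency splitting of $\mu_k$ at the atom's scale corresponds to the expansion $b_Q=\sum_{j\le L(Q)}\psi_j*\psi_j*b_{Q,j}$ and the convolution estimate $\|\mu*\psi_j*g\|_p\lc 2^{-j(d-s)/p'}\|g\|_p$ derived from \eqref{sobassu}; your covering bound $|\Sigma_k+B(0,2)|\lc 2^{k(d-s)}$ is exactly the measure estimate for $\supp(\mu_k)+Q^*$ used for $\cE$; and your $L^p$-plus-Chebyshev treatment of $k>K(\lambda)$ is the single-atom shadow of the estimate for the term $I$. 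But as written the argument proves only a uniform atom bound, which does not imply the theorem; to repair it you would need to replace the atom-by-atom reduction with the global, $\alpha$-dependent construction, and in doing so you would also want the finer square-function atomic decomposition (with the coefficients $\gamma_Q$, $\gamma_{Q,j}$) rather than classical atoms, since the $L^p$ gain must be weighted against $|Q|^{1/2}\gamma_Q$ cube by cube.
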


The covering condition \eqref{covering}
is a dimensional assumption  on the support of the measure;
in particular when $s$ is an integer
 it is satisfied if $\supp (\mu)$ is contained in an
imbedded manifold of codimension $s$. The assumption
\eqref{sobassu} expresses an optimal smoothing result in the category of Besov spaces, for $p$ near $1$. We note that assumptions
\eqref{covering} together with assumption \eqref{sobassu}  imply stronger regularity results in Sobolev and Triebel-Lizorkin spaces, see \cite{pr-ro-se}.
When $p=2$  the  assumption \eqref{sobassu}
 is equivalent with the inequality
$\widehat \mu(\xi) = O(|\xi|^{-\frac{d-s}{2}})$. In this case we recover results
by Oberlin \cite{oberlin}  and  Heo \cite{heo1}, both extending
Christ's original result and method of proof in \cite{christ}.

{\it Examples.} (i) Our main new example concerns the case of arclength
measure on a compact  curve in $\bbR^3$  with nonvanishing curvature
and torsion.
Here  $d=3$, $s=2$ and the main regularity assumption
is a recent result of Pramanik and one of the authors \cite{pr-se},
established  for  $p< (p_W + 2)/p_W$ where  $p_W<\infty$ is an
  exponent for which  a deep inequality of T. Wolff \cite{wolff}
 on decompositions of cone multipliers holds.

(ii) As in   \cite{stw2}, \cite{heo2} one can consider  hypersurfaces
on which the Gaussian curvature does not vanish to infinite order. Then
of course \eqref{covering} holds with $s=1$ and, by
a result  of Sogge and Stein \cite{SoSt},  inequality \eqref{sobassu}
holds for some values of $p>1$ (in a range depending on the order of
vanishing of the curvature).

(iii) An interesting  example occurs in \cite{heo3} where  $\Sigma$ is a  portion of the light cone in $\Bbb R^d$ when $d\ge 5$. In this case one can see  condition
\eqref{sobassu} as a consequence of a sharp space time inequality
for spherical means, see inequality (10.9) in \cite{hns}.

%\medskip

\noi{\it Open problems.}  There are many,  concerning both the hypotheses and the conclusion of Theorem \ref{H1}.

(a) Given the dimensionality assumption \eqref{covering}, under what conditions does \eqref{sobassu} hold? More concretely, if $d\mu=\chi d\sigma$ and
$d\sigma$ is arclength measure on (a compact piece of) a curve of finite type in $\bbR^d$, does \eqref{sobassu} hold with $s=d-1$ and some $p>1$.
This is open in dimensions $d\ge 4$.  Similarly, is $\cM: H^1\to L^{1,\infty}$ for these examples?

(b) More generally, if $\Sigma$ is a manifold of finite type
(in the sense of ch. VIII, \S 3.2. of \cite{steinbook})
does it follow that the lacunary maximal operator
maps $H^1$ to $L^{1,\infty}$?
See \cite{taocyl} for a measure supported on a cylinder for which the
associated lacunary maximal  operator
which does map $H^1$ to $L^{1,\infty}$ but
for which the hypothesis \eqref{sobassu} does not hold.

(c)  In Theorem \ref{H1}, can one replace the usual isotropic dilations by nonisotropic ones, with the corresponding change of Hardy spaces, but keeping the
isotropic assumption \eqref{sobassu}?
See \cite{christ}  for results on averaging operators along curves in two
dimensions  and related results in \cite{stw2}, \cite{heo2}
for hypersurfaces. The
 question whether the
`maximal function along the $(t,t^2,t^3)$ maps the corresponding
anisotropic
Hardy space $\cH^1$  to $L^{1,\infty}$  is currently open.

(d) In  \cite{stw2} it was shown that the lacunary maximal operators
associated to hypersurfaces of finite type (with respect to an arbitrary dilation group) are of weak type $L\log\log L$. This  is the current result closest to a perhaps conjectured weak type $L^1$ bound. It is open whether one can prove a similar   result merely
 under the regularity assumption
\eqref{sobassu}.

{\it Remark.} We take this opportunity to mention a fallacious argument in the exceptional set estimate in \S5 of the article \cite{stw2}  and thank   Neal Bez for  pointing it out.
 A correction is posted on one of the authors'  website
(see the reference to \cite{stw2} below).

\subsection*{Lacunary maximal operators associated to convex curves in the plane}
%We now consider special lacunary maximal functions
%associated to convex curves in the plane.
Let $\Omega$ be a convex open domain in the plane with compact closure so that the origin is contained in $\Omega$.
We let  $\sigma$ be the arclength measure on the  boundary $\bom$ and consider the  question of $L^p$ boundedness of the lacunary maximal operator associated to $\bom$,
\begin{equation}\label{lacmaxdef}
 \cM f(x) =\sup_{k\in \bbZ} \Big|\int_{\bom}
 f(x- 2^{k} y) d\sigma(y) \Big|.
\end{equation}
As mentioned above,  if
$|\widehat \sigma(\xi)|=O(|\xi|^{-\eps})$ for some $\eps>0$, then $\cM$ is  $L^p$ bounded for $1<p<\infty$. Now  we
are aiming for much weaker hypotheses.

The decay of $\widehat \sigma$  is strongly related to a geometric
quantity. Given a unit vector $\theta$ let $\ell^+(\theta)$ be the unique
supporting line with $\theta$ an outer normal to $\bom$, i.e.
the affine line perpendicular to $\theta$ which intersects $\bom$
so that $\Omega$ is a subset of the halfspace $\{x: x=y-t\theta: t>0, y\in\ell^+(\theta)\}$. Similarly define
$\ell^-(\theta)$ as the unique
affine line perpendicular to $\theta$ which intersects $\bom$
and  $\Omega$ is a subset of the halfspace
$\{x: x=y+t\theta: t>0, y\in\ell^-(\theta)\}$.
For small $\delta>0$ define the arcs (or \lq caps\rq )
$$\cC^{\pm}(\theta,\delta) =\{y\in\bom: \dist(y, \ell^{\pm})\le \delta\}.$$
By a compactness consideration it is easy to see that there is $\delta_0>0$ so that for all $\theta\in S^1$ and all $\delta<\delta_0$ the  arcs
$\cC^{+}(\theta,\delta) $
 and $\cC^{-}(\theta,\delta)$ are disjoint.
Let,  $\Lambda(\theta,\delta)$ be the maximum of the length of these caps:
$$\Lambda(\theta,\delta) = \max_\pm \sigma(\cC^{\pm}(\theta,\delta))\,.$$
% +\sigma(\cC^{-}(\theta,\delta)).$$
The analytic  significance of this quantity is that it gives  a
 very good estimate   for the size of Fourier transform $\widehat\sigma $,
namely
for every $\theta\in S^1$ and $R\ge 1$
\begin{equation} \label{FTestimate}
|\widehat \sigma(R\theta)| \le C_\Omega \Lambda(\theta, R^{-1}).
\end{equation}
This is shown in \cite{bnw} under the hypothesis for convex domains in the plane with smooth boundary, with no quantitative assumption on the second derivative. The general case follows by a simple approximation procedure (see also  \cite{pod1}, \cite{pod2} for similar observations).

%It is natural to ask for necessary and sufficient conditions on the boundedness of $\cM$.

We note that for classes of multipliers  satisfying standard symbol assumptions one can insure boundedness under rather weak decay
assumptions on the symbol, but
it is not clear what the optimal conditions are;
 moreover the usual  method of square-functions is often not the appropriate
tool (\cf. \cite{cghs}, \cite{ghs}).
In this light the  following characterization  for $p=2$ in terms of the quantities $\Lambda(\theta,\delta)$
  is perhaps surprising (as well as the fact that it can be proved using simple square-function arguments).

\begin{theorem}\label{convex}
The operator $\cM$ is bounded on $L^2(\bbR^2)$ if and only if
\begin{equation}\label{ell2condcaps}
\sup_{\theta\in S^1} \int_0^{\delta_0} \Lambda (\theta,\delta) ^2 \frac{d\delta}{\delta}
\ < \ \infty.
\end{equation}
\end{theorem}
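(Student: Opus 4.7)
The sufficiency direction ($\Leftarrow$) is an application of a multiplier decomposition and a square function bound. Fix a Schwartz function $\psi$ with $\psi(0)=1$ and write
\[
\widehat{\sigma}(2^k\xi)=\sigma(\bom)\,\psi(2^k\xi)+m_k(\xi).
\]
Convolution with the kernel whose Fourier transform is $\psi(2^k\xi)$ produces a smooth mollifier at scale $2^k$, and its lacunary maximum is pointwise dominated by the Hardy--Littlewood maximal function, hence is $L^2$-bounded unconditionally. For the remaining piece, estimate $\sup_k |f*M_k|\le (\sum_k|f*M_k|^2)^{1/2}$ pointwise and use Plancherel to reduce to showing $\sup_\xi \sum_k |m_k(\xi)|^2<\infty$. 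Split the sum according to whether $2^k|\xi|\le 1$ (where smoothness of $\widehat{\sigma}$ at the origin gives $|m_k(\xi)|\lesssim 2^k|\xi|$, producing a convergent geometric sum) or $2^k|\xi|\ge 1$ (where \eqref{FTestimate} yields $|m_k(\xi)|\lesssim \Lambda(\xi/|\xi|,(2^k|\xi|)^{-1})$, giving a dyadic sum comparable to the integral in \eqref{ell2condcaps}).

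For the necessity direction ($\Rightarrow$), the plan is to test $\cM$ against carefully chosen wave packets which probe the cap quantity $\Lambda(\theta_0,\delta)$ at every dyadic scale. First I would record a matching averaged lower bound for $\widehat{\sigma}$: by stationary phase near the two critical points, $\widehat{\sigma}(r\theta_0)=A^+(r)e^{2\pi ir\phi^+}+A^-(r)e^{2\pi ir\phi^-}+$ negligible, with $|A^\pm(r)|\sim \sigma(\cC^\pm(\theta_0,1/r))$; although the two contributions can nearly cancel for specific $r$, averaging over a dyadic window $[R,2R]$ eliminates the cross term and yields
\[
\frac{1}{\log 2}\int_R^{2R}|\widehat{\sigma}(r\theta_0)|^2\,\frac{dr}{r}\ \gtrsim\ (A^+(R))^2+(A^-(R))^2\ \gtrsim\ \Lambda(\theta_0,1/R)^2.
\]
Given this lower bound, fix $\theta_0$ and a large $N$ and construct $f=\sum_{n=1}^N f_n$ where $\widehat{f_n}$ is a Knapp-type rectangle centered at frequency $2^n\theta_0$, with angular width tuned so that $\widehat{\sigma}(2^{k_n}\xi)$ is essentially constant (and of size $\sim\Lambda(\theta_0,2^{-n})$) on its support for a specific dilation index $k_n\approx -n$. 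Translate the wave packets so their physical supports are disjoint, so that on the support of $f_n$ the outer maximum $\sup_k$ in $\cM f$ can select the favourable $k_n$ independently for each $n$. Summing the pointwise lower bounds over $n$ and comparing with $\|f\|_2^2=\sum_n\|f_n\|_2^2$ gives $\|\cM f\|_2^2/\|f\|_2^2\gtrsim \sum_{n=1}^N \Lambda(\theta_0,2^{-n})^2$, which must remain finite as $N\to\infty$ by the hypothesized $L^2$-boundedness.

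The delicate step---and the one I expect to be the main obstacle---is the construction in the last paragraph. A naive single-frequency wave packet detects only $\sup_k|\widehat{\sigma}(2^k\xi_0)|\lesssim\sigma(\bom)$, never the sum $\sum_n \Lambda^2$, so it is essential that the ``best'' index $k_n$ varies with $n$ and that the spatially separated packets interact with the maximum in a genuinely independent way. One must also ensure that the Schwartz tails of neighbouring packets do not produce destructive interference when summed against $\widehat{\sigma}$; a possible remedy is to randomize the phases via $\epsilon_n\in\{\pm 1\}$ and average via Khintchine's inequality, replacing $\|\cM f\|_2^2$ with $\mathbb{E}_\epsilon\|\cM f_\epsilon\|_2^2$ so that only diagonal contributions survive. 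Handling the interaction with the second stationary cap $\cC^-$---and its possibly comparable contribution that could cancel $\cC^+$ for resonant $r$---is resolved by the averaged lower bound above, which is precisely why dyadic averaging is built into the integral condition \eqref{ell2condcaps}.
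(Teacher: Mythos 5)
Your sufficiency argument is correct and is essentially the paper's: compare $A_k$ with a mollifier whose lacunary supremum is controlled by the Hardy--Littlewood maximal function, dominate the difference by the square function, and use Plancherel together with the Bruna--Nagel--Wainger bound \eqref{FTestimate}, the Lipschitz bound near $\xi=0$, and the monotonicity of $\delta\mapsto\Lambda(\theta,\delta)$ to sum the dyadic pieces against the integral in \eqref{ell2condcaps}.

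The necessity half has a genuine gap, and it sits exactly at the step you flagged. If the packets $f_n$ have disjoint physical (hence essentially disjoint output) supports and each is engineered so that $\cM f\gtrsim\Lambda(\theta_0,2^{-n})\,|f_n|$ on the support of $f_n$, then what you actually obtain is $\|\cM f\|_2^2\gtrsim\sum_n\Lambda(\theta_0,2^{-n})^2\|f_n\|_2^2$ against $\|f\|_2^2=\sum_n\|f_n\|_2^2$; the ratio is a weighted \emph{average} of the numbers $\Lambda(\theta_0,2^{-n})^2$, hence bounded by $\max_n\Lambda(\theta_0,2^{-n})^2\le\sigma(\bom)^2$ no matter how the weights $\|f_n\|_2$ are chosen. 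The $\ell^2$-sum never appears. Overlapping the packets does not help: at each point the supremum over $k$ selects a single scale. What is needed --- and what the paper does --- is a \emph{single} localized input whose images under the \emph{different} dilated averages land in pairwise disjoint output regions. Concretely, after reducing by positivity and a partition of unity to the graph form \eqref{maximal-form}, one tests against $f_\eta=(4\eta)^{-1/q}\chi_{E_\eta}$ where $E_\eta$ is a $1\times\eta$ strip perpendicular to $\theta_0$: for each large $k$ the curve dilated by $2^k$ meets $E_\eta$ in a cap of parameter length $\approx\gamma^{-1}(2^{-k}\eta)\approx\sigma(\cC(\theta_0,2^{-k}\eta))$, so $A_kf_\eta\gtrsim\eta^{-1/q}\,\sigma(\cC(\theta_0,2^{-k}\eta))$ on a set $F_{\eta,k}$ of measure $\approx\eta$ located at height $\approx 2^kL$; these sets are disjoint in $k$, and summing their contributions yields
\begin{equation*}
\|\cM f_\eta\|_q^q\ \gtrsim\ \sum_{k\ge k_0}\sigma\bigl(\cC(\theta_0,2^{-k}\eta)\bigr)^q\ \gtrsim\ \int_0^{\delta_1}\Lambda(\theta_0,\delta)^q\,\frac{d\delta}{\delta}.
\end{equation*}
This is a purely physical-space computation; no stationary-phase lower bound for $\widehat\sigma$, no Knapp packets, and no Khintchine randomization are needed. (Your averaged two-sided asymptotics for $\widehat\sigma$ of a general convex curve with possibly flat points are themselves delicate and are not established in your sketch, but even granting them the counting above still only detects $\sup_\delta\Lambda(\theta_0,\delta)$.)
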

%The proof will be given in \S\ref{convexproof}.

{\it Problem:} For $q\neq 2$, find necessary and sufficient conditions  on $\bom$ in order for $\cM$ to be bounded on  $L^q(\bbR^2)$.

It may be interesting to look at specific \lq flat\rq \  examples.
By testing $\cM$ on functions supported in thin strips we
shall obtain a  necessary condition
\begin{equation}\label{Lqnec}
\sup_{\|f\|_q=1}\|\cM f\|_{q} \ge c
\sup_{\theta\in S^1}  \Big(\int_0^{\delta_0} \Lambda (\theta,\delta) ^q \frac{d\delta}{\delta}\Big)^{1/q}
\end{equation}
for all $q$.
We note  that if ${\bom}$ has only one ``flat'' point near which ${\bom}$
can be parametrized as the graph of $C+\exp(-1/|t|^a)$ with $C\neq 0$,
then this $L^q$ condition
holds iff $a<q$. Thus in this case  $L^2$ boundedness of $\cM$
 holds if and only if $a<2$.

\subsection*{$\bold L^{\bold p}$-regularity of averages}

As in  the previous  section we consider convex curves  $\Sigma$, say boundaries of a convex domain but the position of the origin will not play a role now.
The estimate \eqref{FTestimate} can be interpreted as an $L^2$ regularity result for the integral  operator
$\cA: f\mapsto f*\sigma$.
We reformulate this with the standard dyadic frequency decomposition $\{P_k\}_{k=0}^\infty$ as
above. Then setting
\begin{equation}\label{omk}
\om_k=\frac{1}{\sup_{\theta\in S^1}\Lambda(\theta, 2^{-k})}
\end{equation}
\eqref{FTestimate} says that the inequality
%$L^2$-regularity result is
\begin{equation}\label{LpSob}
\Big\| \Big(\sum_{k>0} \omega_k^2 |P_k \cA f|^2\Big)^{1/2}\Big\|_p \lc \|f\|_p
\end{equation}
holds for $p=2$.

We are now interested  in analogous $L^p$ regularity results, {\it i.e.} we wish to determine the  range of $p$ for which \eqref{LpSob} holds,
with the optimal weight $\om_k$ in  \eqref{omk}.
In case the curvature vanishes somewhere  one expects this inequality to hold for some
$p\neq 2$; for example if $\Sigma$ is of finite type, and if $m$ is the order of maximal contact of tangent lines with $\Sigma$ then \eqref{LpSob} holds with the optimal $\omega_k= 2^{-k/m}$ for the  range $\frac{m}{m-1}<p<m$, see \cite{StW}, \cite{christ2} (and also \cite{se-duke}, \cite{yang} for variable coefficient analogues).
Given these known examples we are mainly interested in very flat cases.
We shall prove \eqref{LpSob} for a family of  curves with additional
hypotheses which cover interesting examples for which the curvature
 vanishes of infinite order at a point. In those flat cases one gets  \eqref{LpSob} in the full range  $1<p<\infty$.

After a localization
% and a translation
we assume that (part of) the convex curve  is given as a graph
$(t, \gamma(t))$ for $0\le t\le 1$ and consider the integral operator
$$A f(x)=\int_0^1 f(x_1-t, x_2-\gamma(t)) dt.$$

\begin{theorem} \label{Lpsobthm}
Let  $\gamma$ be of class $C^3$ on $(0,1]$ and of class $C^2$ on $[0,1]$.
Assume that
$\gamma(0)=\gamma'(0)=\gamma''(0)=0$,
 $\gamma''$ is nonnegative  and strictly increasing on $[0,1]$, and furthermore,
the limit
$$b=\lim_{t\to 0+} \frac{\gamma''(t)}{t\gamma'''(t)}
$$
exists with $b\in [0,\infty)$.
Let $$w_k=\frac{1}{\gamma^{-1}(2^{-k})}.$$ Then
the inequality
\begin{equation}\label{LpSobR}
\Big\| \Big(\sum_{k>0} w_k^2 |P_k A f|^2\Big)^{1/2}\Big\|_p \lc \|f\|_p
\end{equation}
holds for
%%%$\frac{1+2b}{1+b}<
$1+ \frac{b}{1+b}<p<2+b^{-1}$ (and thus for $1<p<\infty$ if $b=0$).
\end{theorem}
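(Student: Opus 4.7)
The plan is to reduce the weighted square function estimate to $L^p$-Sobolev bounds for convolutions with curves of non-vanishing curvature, via a dyadic decomposition of $\mu$ adapted to the geometry of $\gamma$, and to then reassemble the pieces using vector-valued Littlewood--Paley inequalities. The hypothesis that $\gamma''(t)/(t\gamma'''(t))$ has limit $b$ is an asymptotic homogeneity condition: it forces $(\log\gamma'')'(t)\cdot t \to 1/b$, hence $\gamma''(t) \asymp t^{1/b}$ for small $t$, and so $\gamma(t) \asymp t^{m}$ with $m = 2 + b^{-1}$ (formally $m = \infty$ when $b = 0$). Under this reading the target range $1 + b/(1+b) < p < 2 + b^{-1}$ is exactly the classical range $m/(m-1) < p < m$ appearing in the statement of the theorem, and the weight is $w_k = 1/\gamma^{-1}(2^{-k}) \asymp 2^{k/m}$.

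Concretely, I would set $t_n = \gamma^{-1}(2^{-n})$, split $\mu = \sum_n \mu_n$ with $\mu_n$ the restriction of $\mu$ to $[t_{n+1}, t_n]$, and on each piece perform the anisotropic rescaling $(x_1, x_2) \mapsto (t_n x_1,\, 2^{-n} x_2)$. The monotonicity of $\gamma''$ together with the limit hypothesis guarantees that on $[t_{n+1}, t_n]$ both ratios $\gamma''(t)/\gamma''(t_n)$ and $t_n^2\gamma'''(t)/\gamma''(t_n)$ are uniformly bounded above and below in $n$; consequently the rescaled measure $\widetilde{\mu}_n$ is supported on a $C^2$ curve with curvature of order $1$ on a unit-sized interval, with uniform $C^3$ control in $n$. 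For such a non-degenerate rescaled curve the classical $L^p$-Sobolev / weighted square function inequalities (\cite{StW}, \cite{christ2}) apply in the full range $1 < p < \infty$ and deliver, for each $n$, the correct Littlewood--Paley estimate in the rescaled frame.

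Undoing the rescaling converts the isotropic frequency annulus $|\xi| \sim 2^k$ into an anisotropic Fourier region, and a stationary-phase analysis shows that only pieces $\mu_n$ with $n$ within a bounded distance of $k$ contribute non-trivially; the off-diagonal operators $P_k A_n$ with $|n-k|$ large decay polynomially in $|n-k|$. The main obstacle I anticipate is the reassembly step: combining the rescaled bounds into the weighted square function \eqref{LpSobR} requires a vector-valued Littlewood--Paley argument in which the parabolic rescaling on each piece must be correctly aligned with the isotropic projections $P_k$, and it is precisely here that the $p$-range restriction enters. Summing the rescaled $L^p$ estimates over $n$ converges only when $p$ avoids the parabolic endpoints $m/(m-1)$ and $m$; the upper bound $p < 2 + b^{-1}$ is what makes the sum in $n$ converge, and the lower bound $p > 1 + b/(1+b)$ then follows by duality, producing the stated range.
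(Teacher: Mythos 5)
Your proposal rests on the reduction ``$\gamma''(t)/(t\gamma'''(t))\to b$ implies $\gamma''(t)\asymp t^{1/b}$, hence $\gamma(t)\asymp t^{m}$ with $m=2+b^{-1}$,'' and this is where it breaks down. The limit hypothesis only says that $\log\gamma''$ behaves like $\tfrac1b\log t$ up to slowly varying corrections, i.e.\ $\gamma''(t)=t^{1/b+o(1)}$; it does not give two-sided power bounds (take $\gamma''(t)=t^{1/b}\log(1/t)$). More seriously, the case of principal interest in the theorem is $b=0$, where your ``$m=\infty$'' reading is vacuous: the examples $\gamma(t)=e^{-t^{-a}}$ and the iterated exponentials are infinitely flat at $0$, there is no finite-type model to rescale to, and yet the theorem asserts the full range $1<p<\infty$. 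Your rescaling step also fails concretely in these flat cases: with $t_n=\gamma^{-1}(2^{-n})$ one has $t_{n+1}/t_n\to 1$ (e.g.\ $t_n\approx(n\ln 2)^{-1/a}$ for $\gamma=e^{-t^{-a}}$), so after the map $(x_1,x_2)\mapsto(t_nx_1,2^{-n}x_2)$ the curve lives over an interval of length $1-t_{n+1}/t_n\to 0$ while its rescaled curvature $2^nt_n^2\gamma''(t_nt)\,$ blows up; it is not ``a $C^2$ curve with curvature of order $1$ on a unit-sized interval with uniform $C^3$ control,'' so the classical nondegenerate estimates cannot be invoked uniformly in $n$.

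The paper's argument is organized quite differently and avoids these issues. For each frequency scale $k$ it sets $h(t)=t^2\gamma''(t)$, $t_{k,0}=h^{-1}(2^{-k})$, and then subdivides $[t_{k,0},1]$ at the points where $\gamma''$ doubles, $\gamma''(t_{k,n})=2^n\gamma''(t_{k,0})$; this is a two-parameter, curvature-adapted decomposition rather than a one-parameter decomposition by the level sets of $\gamma$. On the $n$-th piece van der Corput's lemma gives an $L^2$ square-function bound with gain $2^{-n/2}$ (Proposition 4.1), while for all $1<p<\infty$ the vector-valued maximal function along $(t,\gamma(t))$ (the eight-author theorem, applicable because $\gamma'$ is doubling) together with the elementary estimate $w_kt_{k,n}\lesssim 2^{n(b+\eps)}$ gives an $L^p$ bound with loss $2^{n(b+\eps)}$ (Proposition 4.2). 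Interpolating these two bounds and summing the geometric series in $n$ is precisely what produces the range $1+\tfrac{b}{1+b}<p<2+b^{-1}$; it is not a matter of ``summing rescaled parabolic estimates over $n$'' converging away from $m/(m-1)$ and $m$. The quantity $b$ enters through the integral of $\tau_b(s)=s/(\gamma'''(({\gamma''})^{-1}(s))({\gamma''})^{-1}(s))-b$ against $ds/s$, which controls the growth of $({\gamma''})^{-1}(\rho_{k,n})/({\gamma''})^{-1}(\rho_{k,0})$ --- a step that has no counterpart in your outline. To salvage your approach you would need, at a minimum, a correct uniform statement about what each piece rescales to in the flat regime, and a replacement for the interpolation mechanism that generates the stated $p$-range.
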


\noi{\it Examples.} We note that,  in the setup  of  this theorem,    $w_k \approx \om_k$ as defined in \eqref{omk}.
In the  case  $\gamma_\circ(t)= t^m$ with $m>2$ we recover the
 known result mentioned above  since   $b^{-1}=m-2$ and
$w_k\approx 2^{-k/m}$. In the flat case $\gamma_1(t)= e^{-t^{-a}}$ we have
$b=0$ and $w_k\approx k^{-1/a}$. We may consider evem flatter cases: Let $\exp_*^n$ the $n$-fold iteration $\exp\circ\dots\circ \exp$ and $\log_*^n $ the $n$-fold iteration of $\ln$. For large $C>0$ consider
 $\gamma_2(t)= \exp(- \exp_*^n(Ct^{-\la}))$. Then
 $b=0$ and $w_k\approx (\log_*^n (e^n+k))^{-1/\la}$.

%\medskip

\noi{\it Open problems.}

(i) For the curves $\Gamma(t)=(t,\gamma(t))$ featured in Theorem \ref{Lpsobthm}
let $\fM$ be the maximal function along $\Gamma$
(as in \eqref{maxalong} below).
Does $\fM$ map the
Hardy-space $H^1_{\text{prod}}$
(associated with  the two-parameter  dilations $(t_1\cdot, t_2\cdot)$)
to  the Lorentz space $L^{1,2}$?
Similar questions can be formulated for certain singular integrals along
$\Gamma$. For the finite type case ($\gamma(t)=t^m$) such estimates can be found in \cite{setao}. For the flat cases one would need to further
 explore Hardy space structures associated to the curve $\Gamma$.

(ii) Let $\mathscr D^{1/m}f= \F^{-1}[(1+|\xi|^2)^{1/2m}\widehat f]$, the fractional Bessel derivative of order $1/m$. For the finite type $m$  case there
are endpoint estimates involving Lorentz-spaces $L^{p,2}$, namely
it was shown in \cite{setao} that
$\mathscr D^{1/m} A$ maps    $L^{m,2}$ to $L^m$ and $L^{m',2}$ to $L^{m'}$, see  \cite{christ2} for the sharpness of such results.
 It would  be   interesting to   investigate
sharp regularity results for general $\gamma$. One  aims to bound
the square-function
$(\sum_{k>0} w_k^2 |P_k A f|^2)^{1/2}$ in
natural Orlicz or Orlicz-Lorentz
spaces  associated with $\gamma$.
As suggested by the method in \cite{setao}, endpoint results  should be related to a resolution of
problem (i).

(iii) Can one prove \eqref{LpSob} for more general convex domains; in particular, can one relax the  monotonicity assumption  on $\gamma''$ in
Theorem  \ref{Lpsobthm}?

\medskip

{\it This paper.} The proof of  Theorem \ref{H1}  will be
given in \S\ref{H1proof}.
The proof of Theorem \ref{convex} is in \S\ref{convexproof}.
 The proof of Theorem \ref{Lpsobthm} is in
in \S\ref{Lpsobproof}.
In \S\ref{hyp} we formulate yet another problem concerning
lacunary maximal functions for dilates of a simple
Marcinkiewicz multiplier.

{\bf Acknowledgements}: We would like to thank
Terry Tao for conversations  on the subject of lacunary maximal operators
and collaboration on  \cite{stw1} and \cite{stw2}.

\section{Proof of theorem \ref{H1}}\label{H1proof}

{\it Atomic decompositions.}
In what follows $\psi$
will denote  a nontrivial $C^\infty$ function  with compact support in $\{x: |x|\le 1/2\}$ so that $\widehat\psi(\xi)\neq 0$ for $\tfrac 14\le|\xi|
\le 4$ and
$\widehat \psi$ vanishes to order $10 d$ at the origin. We let
$\psi_l= 2^{-ld}\psi(2^{-l}\cdot)$.

We use the atomic decomposition  based on a square-function characterization
as given in \cite{crf}; for variants and applications of this method see
\cite{se-stud}, \cite{stw1}, \cite{hns}.
Given $f$ in $H^1$ one can write $f=\sum_Q b_Q$ where this sum
converges in $H^1$, each $b_Q$ is supported in the double of the
dyadic cube $Q$ with sidelength $2^{L(Q)}$, and has the
following fine structure.
We have $$b_Q =\sum_{j\le L(Q)}\psi_j*\psi_{j}*b_{Q,j}$$ where $b_{Q,j}$
can be decomposed  as
$b_{Q,j}= \sum_{R\in \fR(Q,j)} e_R$, the families $\fR(Q,j)$  consist
of dyadic cubes $R$ of sidelength $2^j$ contained in $Q$ with disjoint interior, the bounded function $e_R$ is  supported on $R$, and finally
\begin{equation}
\sum_Q |Q|^{1/2}  \Big(\sum_{j\le L(Q)}\sum_{R\in \fR(Q,j)}\|e_R\|_2^2\Big)^{1/2} \lc \|f\|_{H_1}.
\end{equation}
We set
%\begin{align*}
$$
\gamma_{Q,j}=\Big(\sum_{R\in \fR(Q,j)}\|e_R\|_2^2\Big)^{1/2}, \qquad
\gamma_{Q}=\Big(\sum_{j\le L(Q)} \gamma_{Q,j}^2\Big)^{1/2},
$$
%\end{align*}
and note that
\begin{align*}
\|b_{Q,j}\|_2&\lc \ga_{Q,j}\, ,
\\
\|b_{Q}\|_2&\lc \ga_Q\, ,
\end{align*} and
$$\sum_Q|Q|^{1/2}\ga_Q \lc \|f\|_{H^1}\,.$$

%\marginpar{\tt REVISE and expand the previous part}

\medskip

{\it The weak type inequality.} The condition in \eqref{sobassu}
 becomes more restrictive as $p$ increases and therefore we may assume $p\le 2$.

By a scaling argument we may assume that
$\mu_0=\mu$ is supported in the unit ball  centered at the origin.
(the operator norm will depend on that scaling).

We need to show that
\Be\label{weaktype}
\meas \big(\big\{x: \sup_k \big| \mu_k* \sum_Q b_Q\big|
 >\alpha\big\}\big) \lc \alpha^{-1}\|f\|_{H^1}\,.
\Ee
To achieve this we assign for each $Q$ an  integer  $\tau(Q)$ depending on $\alpha$,
defined as follows. We first let  $\widetilde \tau(Q)$ be  the smallest
integer $\tau$  for which
$$ 2^{(d-s)\tau } 2^{sL(Q)} \ge \alpha^{-1}|Q|^{1/2}\gamma_Q\,$$
(or $-\infty$ if there is no such smallest integer)
and define $$\tau(Q)=\max \{L(Q), \widetilde \tau(Q)\}\, .$$

We form an exceptional set depending on $\alpha$ by  $$\cE=
\bigcup_Q \bigcup_{k: L(Q)<k\le \tau(Q)} (\supp (\mu_k)+ Q^*) $$
where  $Q^*$ is the tenfold dilate of $Q$ with respect to its center.
Note that
$(\supp \mu_k+ Q^*)$ is contained in the $2^k$-dilate of a $C2^{L(Q)-k}$ neighborhood of $\supp (\mu)$. By the assumption on $\mu$ this
neighborhood can be covered with $\lc 2^{(k-L(Q))(d-s)}$
balls of radius $2^{L(Q)-k}$.
Thus
$$\meas(\supp (\mu_k)+ Q^*) \le C_1 2^{kd} 2^{(k-L(Q))(d-s)} 2^{(L(Q)-k)d} =
C_12^{k(d-s)} 2^{L(Q)s}$$
and thus
$$
\meas(\cE)\lc
\sum_{Q:L(Q)< \tau(Q)}
2^{\tau(Q)(d-s)} 2^{L(Q)s}\,.
$$
By the minimality property
of $\tau(Q)$ in its definition  it follows that for the case
 $L(Q)<\tau(Q)$ the inequality
$2^{(\tau(Q)-1)(d-s)}2^{L(Q)s}\le \alpha^{-1}|Q|^{1/2}\gamma_Q$ is sa\-tis\-fied.
Thus
\Be\label{excsetest}
\meas(\cE)\lc
\sum_{Q:L(Q)< \tau(Q)}   \alpha^{-1}  |Q|^{1/2}\gamma_Q \lc  \alpha^{-1}\|f\|_{H^1}.\Ee

%Thus we only have to estimate the measure of the portion of the set in
%\eqref{weaktype} which lies in the complement of $\cE$.
We split $\sup_k |f*\mu_k(x)|$ into three parts depending on $\alpha$.
\begin{align*}
I(x)&=\sup_k\Big|\mu_k*\sum_{Q: \tau(Q)<k} b_Q\Big|\,,
\\
II(x)&=\sup_k\Big|\mu_k*\sum_{Q: L(Q)<k\le \tau(Q)} b_Q\Big|\,,
\\
III(x)&=\sup_k\Big|\mu_k*\sum_{Q: k\le L(Q)} b_Q\Big|\,.
\end{align*}
Note that $II$ is supported in $\cE$
and thus
\Be
%\begin{multline}
\label{reductionthreeparts}
\meas \big(\big\{x: \sup_k \big| \mu_k* \sum_Q b_Q\big| >\alpha\big\}\big) \lc\\
\frac{\|I\|_p^p}{\alpha^p}+ \meas(\cE)+
\frac{\|III\|_1}{\alpha}
\Ee
%\end{multline}
where $p$ is as in assumption \eqref{sobassu}.

The estimation for $\|III\|_1$ is straightforward and just uses the $L^2$ boundedness of the lacunary maximal operator.
Note that for $k\le L(Q)$ the function $\mu_k*f$ is supported in $Q^*$. We estimate
\begin{align}
\|III\|_1&\le \sum_Q \big\|\sup_{k\le L(Q)} |\mu_k* b_Q|\big \|_1\notag
\\
& \le \sum_Q |Q^*|^{1/2} \|\sM b_Q\|_2\notag
\\ & \lc\sum_Q |Q|^{1/2} \|b_Q\|_2 \lc \|f\|_{H^1}\,.
\label{IIIest}
\end{align}

We turn to the main term $I$ and estimate
\begin{align}
\|I\|_p^p&= \Big\| \sup_{k}\big| \mu_k* \sum_L
\sum_{\substack {Q:L(Q)=L\\ \tau(Q)\le k}}
\sum_{n\ge 0}  \psi_{L-n}*\psi_{L-n} *b_{Q,L-n}\big|\,\Big\|_p
\notag
\\
&\le \Big(\sum_{n=0} \Big(\sum_k \big\|I_{n,k}
\big\|_p^p\Big)^{1/p}
\Big)^p
\label{Mink}
\end{align}
where
$$I_{n,k}=
\mu_k* \sum_L
\sum_{\substack {Q:L(Q)=L\\ \tau(Q)\le k}}
 \psi_{L-n}*\psi_{L-n} *b_{Q,L-n}.$$
 We shall bound $\sum_k\|I_{n,k}\|_p^p$ with some exponential gain in $n$.

We first observe as a consequence of assumption \eqref{sobassu}
the simple convolution inequality
$\|\mu*\psi_j* g\|_p \lc 2^{-j(d-s)/p'}\|g\|_p$. Indeed this follows from \eqref{sobassu} by observing that $\|P_k \psi_j\|_1\lc 2^{-|j-k|2d}$ where we use the vanishing moment assumption on $\psi_j$.
%The required estimate will follow if we can prove
%that for $n\ge 0$ and some $\epsilon=\epsilon(p)>0$
%\Be\label{Inkest}
%\sum_k \Big\|I_{\alpha,n,k}
%$\Big\|_p^p \lc 2^{-n p\epsilon }\alpha^{p-1}
% \sum_Q|Q|^{1/2} \|b_{Q, L(Q)-n}\|_2\Ee
%since  $\sum_Q|Q|^{1/2} \|b_{Q, L(Q)-n}\|_2\lc \|f\|_{H^1}$.
By scaling, the operator of convolution with $\mu_k*\psi_{L-n}$ has the same operator norm as the operator of convolution with $\mu_0*\psi_{L-n-k}$  which is
$O(2^{- (k+n-L)(d-s)/p'})$. Because of the almost orthogonality of the $\psi_j$ we may apply the inequality
$\|\sum\psi_j*g_j\|_p\lc (\sum_j\|g_j\|_p^p)^{1/p}$.
Thus  we get
\begin{align*}
\|I_{n,k}\|_p &\lc \Big(\sum_L\Big\|
\mu_k* \psi_{L-n}*
\sum_{\substack {Q:L(Q)=L\\ \tau(Q)\le k}}
 b_{Q,L-n}\Big\|_p^p\Big)^{1/p}
\\&\lc 2^{- (k+n-L)(d-s)/p'}
\Big(\sum_L\Big\|
\sum_{\substack {Q:L(Q)=L\\ \tau(Q)\le k}}
 b_{Q,L-n}\Big\|_p^p\Big)^{1/p}
\\&\lc 2^{- (k+n)(d-s)/p'}
\Big(\sum_{\substack {Q: \tau(Q)\le k}}  2^{L(Q)(d-s)(p-1)} \,
\|b_{Q,L-n}\|_p^p\Big)^{1/p}\, ,
\end{align*}
where the last inequality follows from the  disjointness of the $Q$ with fixed $L=L(Q)$.
Thus  we get after interchanging summations and summing in $k\ge \tau(Q)$
\begin{align*}
\Big(\sum_k\|I_{n,k}\|_p^p\Big)^{1/p} &\lc
2^{-n(d-s)/p'}\Big(\sum_Q
2^{(L(Q)-\tau(Q))(d-s)(p-1)}
\|b_{Q,L-n}\|_p^p\Big)^{1/p}
\\
&\lc
2^{-n(d-s)/p'}\Big(\sum_Q
2^{(L(Q)-\tau(Q))(d-s)(p-1)} |Q|^{1-p/2}
\gamma_Q^p
\Big)^{1/p}\,.
\end{align*}
In the last estimate  we have used  H\"older's inequality and
$\gamma_{Q,j}\le \gamma_Q$.
By the definition of $\tau(Q)$ we have
%$2^{(d-s)\tau(Q)}  2^{sL(Q)}\ge \alpha^{-1}|Q|^{1/2}\gamma_Q$,
$$2^{-(d-s)\tau(Q)} \le  2^{-sL(Q)} \alpha|Q|^{-1/2}\gamma_Q^{-1},$$
no matter whether $\tau(Q)>L(Q)$ or $\tau(Q)=L(Q)$.
This leads to
%We put this in the previous displayed inequality and get
$$
2^{(L(Q)-\tau(Q))(d-s)(p-1)} |Q|^{1-p/2}
\gamma_Q^p
\le \alpha^{p-1}|Q|^{1/2}\gamma_Q
$$
and consequently we get
$$\sum_k\|I_{n,k}\|_p^p\le C^p 2^{-n(d-s)(p-1)} \alpha^{p-1}
\sum_Q|Q|^{1/2}\gamma_Q
$$
which by \eqref{Mink} implies
\Be\label{Iest}\|I\|_p^p\le \widetilde C^p  \alpha^{p-1}
\sum_Q|Q|^{1/2}\gamma_Q.\Ee
We finish the proof of \eqref{weaktype} by combining
\eqref{reductionthreeparts}, \eqref{excsetest}, \eqref{IIIest} and \eqref{Iest}.
\qed

\section{Proof of Theorem \ref{convex}}\label{convexproof}

%Here we give the proof of Theorem \ref{convex}.

%Recall that we are trying to understand
%%%the $L^2$ boundedness of ${\mathcal M}f(x) = \sup_k |f*d\sigma_k(x)|$ where $d\sigma$ is
%arclength measure on a smooth, closed, strictly convex curve ${\mathcal K}$ which contains
%the origin in the interior of the region which it encloses.

We shall first prove the necessary condition \eqref{Lqnec} for
$L^q$-boundedness of $\cM$. We check the lower bound by providing an example \
for $\theta=e_2$ and the
 general case follows by rotating the curve. From the positivity of $\cM$ and translation invariance we may reduce to the case where $\cM$ is replaced by the
maximal operator
\begin{equation} \label{maximal-form}
\begin{aligned}
 &Mf (x)=  \sup_{k\in {\mathbb Z}} |A_k f(x)|
\\ &\text{ with } A_kf(x):=
\Bigl|\int_{|t|\le \epsilon} f(x_1 - 2^k t, x_2 - 2^k L + 2^k \gamma(t))  dt
\Bigr| ;
\end{aligned}
\end{equation}
here $t\mapsto \gamma(t)$ is a convex function with $\gamma(0)=0$, $L>0$, $\eps>0$ and the line $\{x_2=L\}$ is a supporting line at $(0,L)$.

We test $M$ on the functions
$$f_\eta(x)= (4\eta)^{-1/q}\chi_{E_\eta}(x)\text{ where }
E_\eta  :=  \{x\in {\mathbb R}^2 : \, |x_1| \le 1, \ |x_2| \le \eta  \}
$$
for  small $\eta>0$. Then  $\|f_\eta\|_q=1$.  For each $k\in {\mathbb Z}$ set
$$
F_{\eta,k} \ = \ \{x\in {\mathbb R}^2 : \,|x_1|\le 1/4, \ 2^k L \le x_2 \le 2^k L + \eta/4 \}.
$$
We define $k_0$ to be the smallest integer $k$ satisfying
$2^k  \ge  \eta  \max\bigl(1/4L, 1/\gamma(\epsilon)\bigr)$ and note that when $k\ge k_0$,
the sets $F_{\eta,k}$ are disjoint and we have the lower
bound $$Mf_\eta(x) \ge (2\eta)^{-1/q}\gamma^{-1}(2^{-k}\eta) \,\text{ for } x\in F_{\eta,k}.$$
Therefore
$$(4\eta)^{-1} \sum_{k\ge k_0} |F_{\eta,k}
| [\gamma^{-1}(2^{-k}\eta)]^q \ \le \ \sum_{k\ge k_0} \int_{F_{\eta,k}} Mf_\eta(x)^q dx \
\le \ \|Mf_\eta\|^q_{q}.
$$
Since $|F_{\eta,k}| = \eta/4$ for each $k$ and $\sigma({\mathcal C}(e_2,2^{-k}\eta) \le B \gamma^{-1}
(2^{-k}\eta)$ for some $B>0$ depending only on $\gamma$, we have
\begin{align*}
B^{-q} \int_0^{\delta_1} \sigma({\mathcal C}(e_2,\delta))^q \frac{d\delta}{\delta} \ &\le B^{-q} \sum_{k\ge k_0} \int_{2^{-k-1}\eta}^{2^{-k}\eta} \sigma({\mathcal C}(e_2,\delta))^q
\frac{d\delta}{\delta} \\& \le  \sum_{k\ge k_0} 4\eta^{-1}|F_{\eta,k}| [\gamma^{-1}(2^{-k}\eta)]^q
\end{align*}
for some $\delta_1>0$ depending  only on $\gamma$.
Thus from the two previous chains of inequalities we obtain
%the inequality
$\int_0^{\delta_1} \sigma({\mathcal C}(e_2,\delta))^q \frac{d\delta}{\delta}
\le 8 \|Mf_\eta\|_q^q$ and this
completes the proof of \eqref{Lqnec}.

We now assume $q=2$ and  that the condition on
the caps ${\mathcal C}(\theta,\delta)$ in \eqref{ell2condcaps} is satisfied.
By a  partition of unity,  the translation invariance and positivity
of ${\cM}$,
we may suppose that the maximal operator  is of the  form
\eqref{maximal-form}.
The averaging operators $A_k$ are convolution  operators with Fourier
multipliers
$$
m_k(\xi) \ = \ m_0 (2^k \xi)  \ := \ \int_{|t|\le \epsilon} e^{-  i 2^k
[ \xi_1 t +  L \xi_2 -  \xi_2 \gamma(t) ]} \, dt.
$$
For small $\xi$ we have the trivial bound
%$m_k$ clearly satisfies the bound
\begin{equation}\label{easy-bound}
|m_k (\xi) - 2\epsilon| \ \le \ C 2^k |\xi|
\end{equation}
where $C$ is a universal constant.
For large $|\xi|$ we will use the bound
\begin{equation}\label{P}
|m_k(\xi)| \ \le \ C \Lambda( \tfrac{\xi}{|\xi|}, (2^k|\xi|)^{-1})
\end{equation} which follows from \eqref{FTestimate}.

Now fix a Schwartz function $\Phi$ with $\int \Phi(x)\,dx = 2 \epsilon$
and define $\Phi_k (x) := 2^{-2k} \Phi(2^{-k}x)$. In order
to prove the $L^2$ boundedness of $M$, we note the
%that by the
pointwise bound
$$
M f (x) \ \le \ \sup_{k\in {\mathbb Z}} |\Phi_k * f(x)| \ + \
\Big(\sum_{k \in {\mathbb Z}} | A_k f(x) - \Phi_k * f(x) |^2\Big)^{1/2}.
$$
The first
term on the right hand side is dominated by the  Hardy-Littlewood maximal function of $f$ and thus defines a bounded operator on all $L^q$, $q>1$.
Therefore  it suffices
by
Plancherel's theorem for the second term  to show that the
function
$$
\xi \ \to \ \sum_{k\in {\mathbb Z}} |m_0(2^k \xi)- \widehat \Phi(2^k \xi) |^2
$$
is a bounded function in $\xi$.
From \eqref{easy-bound} and \eqref{P}, we see that the boundedness
of this function of $\xi$ will follow if we can show that
$$
I(\xi) \ := \ \sum_{k: 2^k |\xi| \ge C} \sigma\bigl({\mathcal C}
(\tfrac{\xi}{|\xi|},
[2^k |\xi|]^{-1})\bigr)^2
$$
is uniformly bounded in $\xi$ for some large $C$. Now
%For each fixed $\theta\in S^1$
$$
\sigma\bigl({\mathcal C}(\tfrac{\xi}{|\xi|},
[2^k |\xi|]^{-1})\bigr)^2 \ \le \ln 2 \int_{[2^k |\xi|]^{-1}}^{[2^{k-1}|\xi|]^{-1}}
\sigma\bigl({\mathcal C}(\tfrac{\xi}{|\xi|},\delta)\bigr)^2 \, \frac{d\delta}{\delta}
$$
and so
$$
I(\xi) \lc \sup_{\theta\in S^1} \int_0^{\delta_0} \sigma\bigl({\mathcal C}
(\theta, \delta)\bigr)^2 \frac{d\delta}{\delta},
$$
establishing the sufficiency part of Theorem \ref{convex}.
\qed

\section{Proof of Theorem \ref{Lpsobthm}}\label{Lpsobproof}
We let $k_\circ=\min\{k: 2^{-k}\le \tfrac 14\gamma''(1)\}$
and only need to consider the terms $P_k A f$ with $k>k_\circ$.
Define
$$h(t)= t^2 \gamma''(t)$$ so that $\gamma(t)\le h(t)$.
For $k>k_\circ$ we  define a finite
increasing  sequence $\{t_{k,n}\}_{n=1}^{N_k}$ so that $\gamma''$ doubles on those points (as long as $t_{k,n}<1$) and denote
the corresponding images under $\gamma''$ by $\rho_{k,n}$. We set
\begin{align}  \label{tk0}
t_{k,0}&= h^{-1}(2^{-k})
\\ \label{rhok0}
\rho_{k,0}&= \gamma''(t_{k,0}),
\end{align}
and, for $n\ge 1$, set
\Be\label{rhokn}
\rho_{k,n}=\begin{cases}  2^n\gamma''(t_{k,0})
& \text{ if } \gamma''(t_{k,0})\le  2^{-n-1} \gamma''(1)
\\
\gamma''(1) & \text{ if } \gamma''(t_{k,0})>2^{-n-1}\gamma''(1)
\end{cases}\, ,
\Ee
and
\begin{equation}\label{tkn}
%t_{k,n}=\begin{cases} {\gamma''}^{-1}\big( 2\gamma''(t_{k,n-1})\big)
%& \text{ if } \gamma''(t_{k,n-1})\le \tfrac 14 \gamma''(1)
%\\
%1 & \text{ if } \gamma''(t_{k,n-1})>\tfrac 14 \gamma''(1)\end{cases}
t_{k,n}={\gamma''}^{-1} (\rho_{k,n})
\,.
\end{equation}
Define
%Also set $\Gamma(t)=(t,\gamma(t))$ and define
\begin{align*}
A_{k,0} f(x) &=\int_0^{t_{k,0}} P_k f(x_1-t, x_2-\gamma(t)) \,dt
\\
A_{k,n} f(x) &=\int_{t_{k,n-1}}^{t_{k,n}} P_k f(x_1-t,x_2-\gamma(t)) \,dt, \quad n\ge 1.
\end{align*}
If we let $$N_k=1+\max\{\nu: \gamma''(t_{k,\nu})\le \frac 12 \gamma''(1)\}$$
then $t_{k,n}<1$ if $n\le N_k-1$ and $t_{k,n}=1$ for $n\ge N_k$; consequently
$A_{k,n}=0$ for $n>N_k$. By Minkowski's inequality we have
$$\Big\|\Big(\sum_{k>k_\circ} |w_k P_k A f|^2\Big)^{1/2}\Big\|_p
\le \sum_{n=0}^\infty
\Big\|\Big(\sum_{\substack{k:k>k_\circ\\n\le N_k}}
 |w_k  A_{k,n}f|^2\Big)^{1/2}\Big\|_p\,.
$$

Theorem \ref{Lpsobthm}
follows from the following two propositions by  interpolation; this can be seen as a variant of  arguments in \cite{christ2}, \cite{se-duke},
\cite{yang}.

\begin{proposition}\label{AknL2} For $n\ge 0$
 $$ \Big\|\Big(\sum_{k\ge k_\circ} |w_k  A_{k,n}f|^2\Big)^{1/2}\Big\|_2 \le C 2^{-n/2} \|f\|_2.$$
\end{proposition}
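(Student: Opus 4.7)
By Plancherel, writing $\widehat{A_{k,n} f}(\xi) = \beta_k(\xi)\, m_{k,n}(\xi)\, \widehat{f}(\xi)$ with $\beta_k$ the symbol of $P_k$ and
$$m_{k,n}(\xi) = \int_{t_{k,n-1}}^{t_{k,n}} e^{-i(\xi_1 t + \xi_2 \gamma(t))}\, dt$$
(with the convention $t_{k,-1} := 0$), the square-function estimate reduces to the multiplier bound
$$\sup_\xi\, \sum_k w_k^2\, |\beta_k(\xi)|^2\, |m_{k,n}(\xi)|^2 \,\lesssim\, 2^{-n}.$$
Since $\beta_k$ is supported in $|\xi| \sim 2^k$, only $O(1)$ values of $k$ contribute for each $\xi$, so it is enough to prove the pointwise estimate $w_k^2 |m_{k,n}(\xi)|^2 \lesssim 2^{-n}$ whenever $|\xi| \sim 2^k$ and $n \le N_k$.

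The argument rests on two structural observations. First, since $\gamma''$ is increasing, $\gamma(t) = \int_0^t (t-s)\gamma''(s)\,ds \le t^2 \gamma''(t) = h(t)$, so $t_{k,0} = h^{-1}(2^{-k}) \le \gamma^{-1}(2^{-k}) = 1/w_k$; combined with the identity $\rho_{k,0} = 2^{-k}/t_{k,0}^2$ this yields both $w_k t_{k,0} \le 1$ and $w_k^2 \le 2^k \rho_{k,0}$. Second, for $1 \le n \le N_k$ the doubling construction \eqref{rhokn} forces $\rho_{k,n-1} = 2^{n-1} \rho_{k,0}$ with $\rho_{k,n-1} \le \gamma''(1)/2$, and hence $\rho_{k,0} \lesssim 2^{-n}$.

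I then split into two cases depending on the direction of $\xi$. In the tangential case $|\xi_2| \ge c|\xi_1|$ (so $|\xi_2| \gtrsim 2^k$), the phase $\phi(t) = \xi_1 t + \xi_2 \gamma(t)$ satisfies $|\phi''(t)| = |\xi_2|\gamma''(t) \ge |\xi_2|\rho_{k,n-1} \gtrsim 2^{k+n}/t_{k,0}^2$ on $[t_{k,n-1},t_{k,n}]$ for $n \ge 1$; the van der Corput second-derivative lemma gives $|m_{k,n}(\xi)| \lesssim 2^{-n/2}\, t_{k,0}$, so $w_k|m_{k,n}(\xi)| \le w_k t_{k,0} \cdot 2^{-n/2} \le 2^{-n/2}$. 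In the transverse case $|\xi_2| < c|\xi_1|$, choosing $c$ so that $c\gamma'(1) < 1/2$ guarantees $|\phi'(t)| = |\xi_1 + \xi_2\gamma'(t)| \sim 2^k$ and $|\phi''(t)|/|\phi'(t)|^2 \lesssim 2^{-k}$ throughout the interval; a single integration by parts then yields $|m_{k,n}(\xi)| \lesssim 2^{-k}$, and combining with $w_k^2 \le 2^k \rho_{k,0} \lesssim 2^{k-n}$ gives $w_k^2|m_{k,n}(\xi)|^2 \lesssim 2^{-k-n} \le 2^{-n}$. The case $n=0$ is handled uniformly in both regimes by the trivial estimate $|m_{k,0}(\xi)| \le t_{k,0}$ together with $w_k t_{k,0} \le 1$.

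The main subtlety is the transverse case: van der Corput offers no oscillatory gain there, and the required $2^{-n}$-decay must come entirely from the weight. The identity $w_k^2 \le 2^k \rho_{k,0}$ converts the smallness of $\rho_{k,0}$ (which is forced by $n \le N_k$) into $n$-decay, and this is exactly what the dyadic doubling construction of $\{\rho_{k,n}\}$ is designed to extract.
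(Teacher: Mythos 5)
Your proof is correct and follows essentially the same route as the paper: Plancherel reduces matters to a pointwise multiplier bound on $|\xi|\sim 2^k$, van der Corput's second-derivative lemma handles the tangential frequencies via $\gamma''(t)\ge\rho_{k,n-1}=2^{n-1}2^{-k}/t_{k,0}^2$, and non-stationary phase combined with $w_k t_{k,0}\le 1$ and $\rho_{k,0}\lesssim 2^{-n}$ handles the transverse ones (the paper packages this last step as $2^{-k}\lesssim 2^{-n/2}\gamma^{-1}(2^{-k})$, but it is the same computation). One cosmetic slip: in the tangential case $|\xi_2|\rho_{k,n-1}\gtrsim 2^{n}/t_{k,0}^2$, not $2^{k+n}/t_{k,0}^2$, and it is this corrected bound that yields your stated conclusion $|m_{k,n}(\xi)|\lesssim 2^{-n/2}t_{k,0}$.
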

\begin{proposition}\label{AknLp}
 For any $ \epsilon>0$ and for $1<p<\infty$,
$$ \Big\|\Big(\sum_{k\ge k\circ} |w_k  A_{k,n}f|^2\Big)^{1/2}\Big\|_p \le C_\epsilon2^{n( \epsilon+b)} \|f\|_p
$$
for all $n\ge 0$.
\end{proposition}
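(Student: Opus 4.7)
The plan is to establish a pointwise domination
\[
w_k |A_{k,n} f(x)|\le C_\epsilon 2^{n(b+\epsilon)}\fM(P_k f)(x),
\]
where $\fM g(x):=\sup_{h>0}h^{-1}\int_0^h|g(x-\Gamma(t))|\,dt$ is the one-sided maximal function along $\Gamma(t)=(t,\gamma(t))$, and then to combine this estimate with a vector-valued $L^p(\ell^2)$ inequality for $\fM$ (of Fefferman--Stein type) together with the standard Littlewood--Paley square function bound $\|(\sum_k|P_k f|^2)^{1/2}\|_p\lesssim \|f\|_p$.

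The pointwise bound is immediate from the chain
\[
w_k|A_{k,n}f(x)|\le w_k\int_0^{t_{k,n}}|P_k f(x-\Gamma(t))|\,dt\le w_k t_{k,n}\cdot \fM(P_k f)(x),
\]
so it suffices to establish $w_k t_{k,n}\le C_\epsilon 2^{n(b+\epsilon)}$. Since $\gamma(0)=\gamma'(0)=0$ and $\gamma''$ is nonnegative and increasing, the elementary inequality $\gamma(t)\le t^2\gamma''(t)/2=h(t)/2$ gives $w_k=1/\gamma^{-1}(2^{-k})\le 1/h^{-1}(2^{-k})=1/t_{k,0}$, reducing the task to bounding $t_{k,n}/t_{k,0}$. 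The limit $\gamma''(t)/(t\gamma'''(t))\to b$ translates, via $(\log\gamma'')'(t)=\gamma'''(t)/\gamma''(t)$, to the asymptotic $t(\log\gamma'')'(t)\to 1/b$; integrating from $t_{k,0}$ to $t_{k,n}$ then yields, for any $\epsilon>0$ and $t_{k,n}$ sufficiently small,
\[
(t_{k,n}/t_{k,0})^{1/(b+\epsilon)}\le \gamma''(t_{k,n})/\gamma''(t_{k,0})=2^n
\]
when $b>0$, and the stronger $(t_{k,n}/t_{k,0})^{1/\epsilon}\le 2^n$ when $b=0$; in either case $t_{k,n}/t_{k,0}\le C_\epsilon 2^{n(b+\epsilon)}$, as desired.

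The principal obstacle is the $L^p(\ell^2)$ extension of the scalar $L^p$ bound on $\fM$, which is not automatic from the convex-curve maximal theorems of Nagel--Stein--Wainger type. I propose to handle this by linearization: for each $k$, select a measurable radius $h_k(x)$ realizing (up to a factor of two) the supremum in $\fM(P_k f)(x)$, reducing the vector-valued inequality to a bound on the positive linear family $(g_k)_k\mapsto (T_{h_k}g_k)_k$ on $L^p(\ell^2)$, where $T_r g(x):=r^{-1}\int_0^r g(x-\Gamma(t))\,dt$. The scalar $L^p$ bound on each $T_{h_k}$ (inherited from $\|\fM\|_{p\to p}$), combined with standard interpolation of positive operators between $L^p(\ell^1)$ and $L^p(\ell^\infty)$, closes the gap. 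The boundary case $n=0$ is handled identically, with $t_{k,0}$ in place of $t_{k,n}$.
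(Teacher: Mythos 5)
Your overall architecture coincides with the paper's: the pointwise majorization $|A_{k,n}f|\le t_{k,n}\,\fM(P_kf)$, the key numerical estimate $w_k t_{k,n}\le C(\eps)2^{n(b+\eps)}$, and a vector-valued $L^p(\ell^2)$ bound for the maximal operator $\fM$ along the curve. Your derivation of $w_kt_{k,n}\lc 2^{n(b+\eps)}$ is essentially the paper's computation transported from the variable $s=\gamma''(t)$ back to the variable $t$ (you integrate $\gamma'''/\gamma''$ over $[t_{k,0},t_{k,n}]$, the paper integrates $\tau_b(s)\,ds/s$ over $[\rho_{k,0},\rho_{k,n}]$); your caveat ``$t_{k,n}$ sufficiently small'' leaves out the portion of the interval where $t$ is bounded below, but that portion contributes only a fixed factor $C(\eps)$ (this is the paper's split into $I_{k,n}$ and $J_{k,n}$), so this is a routine repair.

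The genuine gap is in the vector-valued step. After linearizing, you propose to interpolate the diagonal family $(g_k)\mapsto(T_{h_k}g_k)$ between $L^p(\ell^1)$ and $L^p(\ell^\infty)$. The $\ell^\infty$ endpoint is fine (by positivity, $\sup_k T_{h_k}g_k\le \fM(\sup_k|g_k|)$), but the $\ell^1$ endpoint is not available: boundedness on $L^p(\ell^1)$ would amount to $\|\sum_k\fM g_k\|_p\lc\|\sum_k|g_k|\|_p$ for nonnegative $g_k$, which is the $q=1$ case of a Fefferman--Stein type inequality and is false already for one-dimensional averages --- with $g_k=\chi_{[k,k+1]}$, $k=1,\dots,N$, one has $\sum_k Mg_k\gtrsim\log N$ on $[1,N]$ while $\sum_k|g_k|=\chi_{[1,N+1]}$, and the same mechanism defeats the linearized operators $T_{h_k}$. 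The fact that a single positive operator extends boundedly to every $L^p(\ell^q)$ does not apply here, since different operators act on different components. The usable endpoints are $q=p$ (Fubini) and $q=\infty$, which by interpolation give $q\in[p,\infty]$ and hence $q=2$ only when $1<p\le2$; the range $2<p<\infty$ must then be recovered by duality, using that the adjoints $A_{k,n}^*$ are averages along the reflected curve to which the same argument applies. This duality reduction, which is how the paper proceeds, is missing from your write-up.
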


For the proof of the propositions we  need to relate the numbers $t_{k,0}$ to
 the weights $w_k$, and we get
\Be \label{wk-tk0}  w_k t_{k,0} \le 1 \quad\text{ for } k>k_\circ.
\Ee
For  $k>k_\circ$ the range of $h$ includes $2^{-k}$, and thus  \eqref {wk-tk0}
follows from part (i) of the following lemma. Part (ii) shows that
\eqref{wk-tk0}
is effective.
\begin{lemma}\label{hinvest} Let $\gamma$ be as in Theorem \ref{Lpsobthm}.

(i) If  $0\le s\le \gamma(1)$ then $s\le h(1)$ and
$h^{-1}(s)\le \gamma^{-1}(s)$.

(ii) If $0\le s\le h(1/3)$ then  $s\le \gamma(1)$  and
$\gamma^{-1}(s)\le 3h^{-1}(s)$.
\end{lemma}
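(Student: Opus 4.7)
The plan is to reduce both parts to two pointwise comparisons between $\gamma$ and $h$ on $[0,1]$, and then apply monotone inverses. First I would observe that, under the hypotheses of Theorem \ref{Lpsobthm}, the function $h(t)=t^2\gamma''(t)$ is a product of nonnegative strictly increasing factors on $[0,1]$ (with $\gamma''(0)=0$), and therefore is itself strictly increasing; so $h^{-1}$ is well defined and increasing on $[0,h(1)]$, and the same holds for $\gamma^{-1}$ on $[0,\gamma(1)]$.

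For part (i), I would derive the pointwise inequality $\gamma(t)\le h(t)$ on $[0,1]$ from Taylor's formula with integral remainder, $\gamma(t)=\int_0^t(t-s)\gamma''(s)\,ds$, combined with the monotonicity of $\gamma''$ on $[0,t]$. This immediately gives $\gamma(1)\le h(1)$, so the hypothesis $s\le\gamma(1)$ forces $s\le h(1)$; and applying the pointwise inequality at $t=\gamma^{-1}(s)$ yields $s=\gamma(\gamma^{-1}(s))\le h(\gamma^{-1}(s))$, whence $h^{-1}(s)\le\gamma^{-1}(s)$ by monotonicity of $h^{-1}$.

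For part (ii) the key step is the reverse inequality $\gamma(3t)\ge h(t)$ for $t\in[0,1/3]$. To establish this I would restrict the double integral representation $\gamma(3t)=\int_0^{3t}\int_0^u\gamma''(v)\,dv\,du$ to the sub-region $\{2t\le u\le 3t,\ t\le v\le 2t\}$, on which $\gamma''(v)\ge\gamma''(t)$ by monotonicity; this directly produces the lower bound $t\cdot t\cdot\gamma''(t)=h(t)$. Taking $t=1/3$ then shows $h(1/3)\le\gamma(1)$, so $s\le h(1/3)$ implies $s\le\gamma(1)$ as claimed. Setting $r=h^{-1}(s)\le 1/3$, the inequality $\gamma(3r)\ge h(r)=s$ and monotonicity of $\gamma$ finally give $\gamma^{-1}(s)\le 3r=3h^{-1}(s)$.

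I do not anticipate any serious obstacle: the lemma is essentially a pair of monotonicity comparisons, and the only mild subtlety is choosing the right sub-rectangle of integration in part (ii) so as to recover exactly the factor $t^2\gamma''(t)$ defining $h$. The factor of $3$ in the statement is precisely what this choice of region dictates.
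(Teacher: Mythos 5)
Your proposal is correct and follows essentially the same route as the paper: part (i) via the pointwise bound $\gamma(t)\le h(t)$ from Taylor's formula with the monotonicity of $\gamma''$, and part (ii) via the reverse bound $h(t)\le\gamma(3t)$ on $[0,1/3]$, your restriction of the double integral to the rectangle $\{2t\le u\le 3t,\ t\le v\le 2t\}$ being precisely the paper's chain $t^2\gamma''(t)\le t\int_t^{2t}\gamma''\le t\gamma'(2t)\le\int_{2t}^{3t}\gamma'\le\gamma(3t)$ written as an iterated integral. No gaps.
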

\begin{proof}
Note that the range of $h$ and $\gamma''$ on $[0,1]$ is the same.
(i) follows from
$\gamma(t)\le h(t)= t^2\gamma''(t)$
 for $0\le t\le 1$.
But since  $\gamma(0)=\gamma'(0)=0$ and since $\gamma''$ is increasing we
get the better  inequality   $\gamma(t)\le h(t)/2$
which is immediate from  Taylor's theorem.
(ii) follows from  $h(t)\le \gamma(3t)$ for $0\le t\le 1/3$ which
holds by the monotonicity of $\gamma'$ and $\gamma''$; indeed
$t^2\gamma''(t)\le t\int_t^{2t}\gamma''(u) \,du \le t\gamma'(2t)\le
\int_{2t}^{3t}\gamma'(u) du\le \gamma(3t).$
 \end{proof}

We will now turn to the proof of the propositions.
Proposition \ref{AknL2}
relies on van der Corput's lemma (ch.VIII, \S1.2. in\cite{steinbook}),
while the proof of Proposition  \ref{AknLp} relies on the eight authors' theorem \cite{eight} on the boundedness of the maximal operator along $(t,\gamma(t))$ under the $\gamma'$ doubling hypothesis.

\begin{proof}[Proof of Proposition \ref{AknL2}]
We may assume $n\le N_k$. Then for $n\ge 1$ we need to estimate the multiplier
$$m_{k,n}(\xi)= \int_{t_{k,n-1}}^{t_{k,n}} \exp(i(\xi_1 t+\xi_2 \gamma(t))) dt$$
when $2^{k-1}\le |\xi|\le 2^{k+1}$.

For $t\ge t_{k,n-1}$ we have
$$ \gamma''(t)\ge \gamma''(t_{k,n-1})= 2^{n-1} \gamma''(t_{k,0})
= 2^{n-1} \gamma''(h^{-1}(2^{-k}))
$$
and since $h(t)=t^2 \gamma''(t)$,
$$\gamma''(h^{-1}(2^{-k}))=
\frac{[h^{-1}(2^{-k})]^2
\gamma''(h^{-1}(2^{-k}))}
{[h^{-1}(2^{-k})]^2}
= \frac{2^{-k}}{[h^{-1}(2^{-k})]^2} $$
Thus if $|\xi_2|\ge \eps|\xi_1|$ (i.e.$ |\xi_2|\approx 2^k$) we get by van der Corput's Lemma
$$|m_{k,n}(\xi)| \le C|\xi_2|^{-1/2}|\gamma''(t_{k,n-1})|^{-1/2}
\le C_\eps |\xi|^{-1/2} 2^{k/2} 2^{-\frac{n-1}{2}} h^{-1}(2^{-k})
$$
which is at most  $C_\eps'   2^{-n/2} \gamma^{-1}(2^{-k})$,
by \eqref{wk-tk0} .

If  $|\xi_2|\le \eps|\xi_1|$ and $\eps$ is sufficiently small
then the derivative of the phase is $|\xi_1+\xi_2\gamma'(t)|\ge |\xi_1|\approx 2^k$ and we obtain $|m_{k,n}(\xi)| \le C2^{-k}.$ Now $$2^{-k}=
h(t_{k,0})= 2^{-k/2}2^{\frac{1-n}{2}} \sqrt{h(t_{k,n-1})} \lc
\gamma^{-1}(2^{-k}) 2^{-n/2}$$
since $h$ is bounded and $\gamma(t) \lc t^2$. We have now proved
the estimate
\begin{equation} |m_{k,n}(\xi)\beta(2^{-k}|\xi|)| \lc   2^{-n/2} \gamma^{-1}(2^{-k})
\end{equation}
for $n>0$.
For $n=0$ we need to estimate
$$m_{k,0}(\xi)=
%\Big|
 \int_{0}^{t_{k,0}} \exp(i(\xi_1 t+\xi_2 \gamma(t))) dt
$$
and we just use the trivial bound
$$ |m_{k,0}(\xi)|\le t_{k,0}=h^{-1}(2^{-k})
\le \gamma^{-1}(2^{-k}).
$$
These estimates imply the  asserted $L^2$ bound  by Plancherel's theorem.
% from the estimate for  $m_{k,n}(\xi)\beta(2^{-k}|\xi|)$, for $n\ge 0$.
\end{proof}

\begin{proof}[Proof of Proposition \ref{AknLp}]
It suffices to show
\begin{equation}\label{vv}
\Big\|\Big(\sum_k | w_k A_{k,n}  f_k|^2\Big)^{1/2}\Big\|_p
\le C_{\eps,p} 2^{n(b+\eps)}
\Big\|\Big(\sum_k |f_k|^2\Big)^{1/2}\Big\|_p
\end{equation}
for $1<p<\infty$ since we can apply it with $f_k=L_k f$ where
$L_k$
 is a suitable Littlewood-Paley type localization operator  with
$P_k=P_kL_k$. By a duality argument it suffices to prove \eqref{vv} for
$1<p\le 2$.

Consider the maximal function
\Be\label{maxalong}\fM f(x)= \sup_{0<r\le 1} \frac{1}{r}\int_0^r |f(x_1-t, x_2-\gamma(t))| dt.\Ee
By our assumption $\gamma'''(t)\ge 0$ and $\gamma'(0)=\gamma''(0)=0$, the convex function
$u=\gamma'$ satisfies the doubling
condition $u(2t)\ge 2u(t)$ for $0\le t\le 1/2$. Thus by
%the  eight authors theorem
\cite{eight}
the maximal operator $\fM$ is bounded on
$L^p(\bbR^2)$ for
$1<p<\infty$.
We also have  the vector-valued version
\begin{equation}\label{vvM}
\big\|\big\{\fM f_k\big\}_{k\in \bbZ} \big\|_{L^p(\ell^q)}
\lc
\big\|\big\{ f_k\big\}_{k\in \bbZ} \big\|_{L^p(\ell^q)}, \quad 1<p<\infty,\quad p\le q\le\infty\,.
\end{equation}
Indeed for $p=q$ this follows from the $L^p$ inequality  for $\fM$ and interchanging summation and integration. For $q=\infty$ it follows from
using $\sup_k \fM f_k= \fM (\sup_k|f_k|)$ and applying the $L^p$ inequality  for $\fM$. For $p\le q\le \infty$ we use a standard linearization and complex interpolation argument. In particular if $1<p\le 2$, \eqref{vvM} holds for $q=2$.

Now note the trivial majorization
$$|A_{k,n} f_k(x)| \le t_{k,n} \fM[P_kf_k] (x).$$
We shall show
that given $\eps>0$ in the statement of the proposition we have
%\eqref{AknLpest}
\Be\label{wktkn}
\sup_{\substack {k,n :\\ n\le  N_k}} 2^{-n (b+\eps)}w_k t_{k,n} \le C(\eps) \,.
\Ee

Using   \eqref{wktkn} we may estimate
\begin{align*}
\Big\|\Big(\sum_k | w_k A_{k,n}  f_k|^2\Big)^{1/2}\Big\|_p
&\le C(\eps)
2^{n(b+\eps)}
\Big\|\Big(\sum_k | \fM  P_kf_k|^2\Big)^{1/2}\Big\|_p\\
&\le C'(\eps) 2^{n(b+\eps)} \Big\|\Big(\sum_k | f_k|^2\Big)^{1/2}\Big\|_p,
\end{align*}
by  \eqref{vvM}, and vector-valued singular integral estimates for the operator
$\{f_k\}\mapsto \{P_kf_k\}$  on $L^p(\ell^2)$.

 It remains to show  \eqref{wktkn}.
By \eqref{wk-tk0} the required bound holds for $n=0$.
Moreover
$$w_k t_{k,n}
%\frac{{\gamma''}^{-1}(2^n\gamma''(t_{k,0}))} {\gamma^{-1}(2^{-k})}
\,\le\,
\frac{{\gamma''}^{-1}(\rho_{k,n})} {h^{-1}(2^{-k})}
\,=\,
%\frac{{\gamma''}^{-1}(2^n\gamma''(t_{k,0}))}
%{{\gamma''}^{-1}(\gamma''(t_{k,0}))} .
\frac{{\gamma''}^{-1}(\rho_{k,n})}
{{\gamma''}^{-1}(\rho_{k,0})} .
%\quad\text{ where } \rho_k:= \gamma''(t_{k,0})\,.
$$
Consequently  it suffices to show that
$$\sup_{\substack {k,n :\\1\le  n\le  N_k}} 2^{-n (b+\eps)}
\frac{{\gamma''}^{-1}(\rho_{k,n})}
{{\gamma''}^{-1}(\rho_{k,0})}
\le C(\eps) \, ,
$$
or, equivalently
\Be\label{logs}-b \ln 2 + \frac 1n  \ln\big( \frac{{\gamma''}^{-1}(\rho_{k,n})}
{{\gamma''}^{-1}(\rho_{k,0})} \big ) \le \eps \ln (2) + \frac{\ln (C(\eps))}{n},
\Ee
for any $(k,n)$ with $k>k_\circ$ and $1\le n\le N_k$.
Note that $n^{-1} \ln(\rho_{k,n}/\rho_{k,0})=1$ for $n<N_k$ and
$1\le n^{-1} \ln(\rho_{k,n}/\rho_{k,0})\le \tfrac{n+1}{n}$ for  $n=N_k$.
The  left hand side of \eqref{logs} is then equal to
$$\frac{1}{n} \int_{\rho_{k,0}}^{\rho_{k,n}} \tau_b(s) \frac{ds}{s} + E_{k,n}
$$ where $$\tau_b(s) =
\frac{s}{\gamma'''({\gamma''}^{-1}(s)){\gamma''}^{-1}(s) } -b\, ,
$$
$E_{k,n} =0$ for $n<N_k$ and   $|E_{k,n}|\le n^{-1}$ for $n=N_k$.
Thus it suffices to show that  there is $\cN(\eps)>0$ so that
\begin{equation}\label{epsilonest}
\sup_{k: N_k>n}\frac{1}{n} \int_{\rho_{k,0}}^{\rho_{k,n}}| \tau_b(s)| \frac{ds}{s}
\le \eps \ln (2)\text{ for } n>\cN(\eps)\,.
\end{equation}
Now ${\gamma''}^{-1}(s)\to 0$ as $s\to 0$
and, since
$b=\lim_{t\to 0} \frac{\gamma''(t)}{t\gamma'''(t)}$, we have
 therefore $\lim_{s\to 0}  \tau_b(s)= 0$.
Choose $\delta(\eps)\in (0,\frac 12 \gamma''(1)) $ so that $|\tau_b(s)|<\eps/4$ whenever $s<\delta(\eps)$. Because of the assumed behavior of
$\frac {\gamma''(t)}{t\gamma'''(t)}$ near $0$ we see that
$M=\sup_{0< t\le 1} \frac {\gamma''(t)}{t\gamma'''(t)}<\infty$. Let
$\cN(\eps) = 4M\eps^{-1} \ln (\tfrac {\gamma''(1)}{\delta(\eps)})$.

Let  $I_{k,n}=[\rho_{k,0},\rho_{k,n}]\cap (0,\delta(\eps)]$
and $J_{k,n}=[\rho_{k,0}, \rho_{k,n}]\cap [\delta(\eps),\gamma''(1)]$; one of
 these  intervals may be empty.
Now $\tau_b (s)\le \eps/4$ on $I_{k,n}$  and $\rho_{k,n}/\rho_{k,0}< 2^{n+1}$.
Therefore for all $n\le N_k$
$$\frac{1}{n} \int_{I_{k,n}} |\tau_b(s)| \frac{ds}{s}\le \frac{\eps}{4n}
 \int_{\rho_{k,0}}^{\rho_{k,n}}  \frac{ds}{s} \le \frac{n+1}{n} \frac {\eps\ln 2}{4}\le \frac{\eps \ln (2)}{2}.$$
On $J_{k,n}$ we use the estimate  $|\tau_b(s)| \le 2M$ and obtain
for $\cN(\eps)\le n\le N_k$
$$\frac {1}{n} \int_{J_{k,n}} |\tau_b(s)| \frac{ds}{s}
\le \frac {1}{n} \int_{\delta(\eps)}^{\gamma''(1)} 2M  \frac{ds}{s}
= \frac{2M \ln (\tfrac{\gamma''(1)}{\delta(\eps)})}{\cN(\eps)}
\le\frac {\eps}{2}.$$
We combine these two estimates and obtain \eqref{epsilonest}. Thus the proposition
is proved.
\end{proof}

\section{Another open problem}
% on a lacunary maximal function}
\label{hyp}

We recall another open problem concerning
a lacunary maximal operator generated by dilates of a Marcinkiewicz multiplier in two dimensions.
Let $\eta_0$ be a Schwartz function on the real line with
$\eta_0(0)\neq 0.$  Let $$m_k(\xi_1,\xi_2)= \eta_0(2^{2k}\xi_1\xi_2)$$
(the dilates of the so-called hyperbolic cross multiplier) and define
$$\bbM  f(x)= \sup_{k}|\cF^{-1}[m_k\widehat f](x)|.$$

\noi{\it Problem.} Is $\bbM$ bounded on $L^p(\bbR^2)$, for some $p\in (1,\infty)$?

The  problem is closely related to one formulated in \cite{elkohen} and
\cite{dt}, on the
 pointwise convergence for the
``hyperbolic'' Riesz means $R_{\la,t}f$. These are defined by  $\widehat {R_{\la,t} f}(\xi)= (1-t^{-2}\xi_1^2\xi_2^2)_+^\la \widehat f$ and they were
studied in \cite{elkohen} and \cite{carbery}.
 A positive answer to our question is known to  imply
positive $L^p(\bbR^2)$
boundedness results for the maximal function
$M_\la f(x)=\sup_{t>0} |R_{\la,t} f(x)|$, for suitable $\la$.
% (in fact sharp results for $2<p<\infty$ for $\la>\max\{0,1/2-2/p\}$).
A negative answer  would prove that $M_\la$ is unbounded on all $L^p(\bbR^2)$, for all $\la$.

Note that the multipliers $m_k$ satisfy the hypotheses
of the Marcinkiewicz  multiplier theorem in $\bbR^2$.
As proved  in \cite{cghs}
$L^p$ boundedness for lacunary maximal functions generated by
Mikhlin-H\"ormander (or Marcinkiewicz)  multipliers
 fails generically, with respect to the topology in  some natural symbol
spaces.
This however does not settle our  question above.  More results and
open problems on  minimal decay assumptions for the boundedness of
such maximal
operators can be found in \cite{ghs}.

One can also ask for bounds on the $L^p$ operator norm
of the maximal operator
$\bbM_N  f(x)= \sup_{|k|\le N}|\cF^{-1}[m_k\widehat f](x)|.$
Petr Honz\'\i k \cite{honzik} noted that
 one can improve the trivial upper bound $C_pN^{1/p}$
by combining   the   good $\la$ inequalities in  \cite{pipher} with the reasoning in  \cite{ghs}; this yields the bound $C_p \log(N)$ (at least for a discrete analogue).

\end{document}